\documentclass[reqno]{amsart}
\textwidth=15.0 truecm
\textheight=22.5 truecm
\linespread{1.3}
\evensidemargin=10mm
\oddsidemargin=10mm

\usepackage{amsfonts, mathrsfs}
\usepackage{amsthm}
\usepackage{amsmath}
\usepackage{graphicx}
\usepackage{amscd,amssymb}

\begin{document}
\title[On The Extended Incomplete Pochhammer ....]
{On The Extended Incomplete Pochhammer Symbols and Hypergeometric Functions}

\author[]{Rakesh K. Parmar$^\dag$}
\address{$^\dag$ Department of Mathematics, Government College of Engineering and Technology, Bikaner-334004,
Rajasthan, India} \email{rakeshparmar27@gmail.com}

\author[R.K. Parmar, R.K. Raina]{R.K. Raina$^{\ddag, \P}$}
\address{$^\ddag$ M.P. University of Agriculture and Technology, Udaipur-313001, Rajasthan, India}
\address{$^\P$ \emph{Present address:} 10/11, Ganpati Vihar, Opposite Sector 5\\
Udaipur-313002, Rajasthan, India.} \email{rkraina\_7@hotmail.com}
\subjclass[2010]{ Primary 33B20, 33C05, 33C20; Secondary 33B99, 33C99.}
\keywords{Generalized incomplete gamma functions; Generalized Incomplete Pochhammer symbols;  Incomplete hypergeometric functions; Laguerre polynomials; Bessel functions; Generating function}

\begin{abstract}
In this paper, we first introduce certain forms of extended incomplete Pochhammer symbols which are then used to define families of extended incomplete generalized hypergeometric functions. For these functions, we investigate various properties including the integral representations, derivative formula, certain generating function and fractional integrals (and derivatives) relationships. Some special cases of the main results are also deduced.
\end{abstract}

\maketitle
\numberwithin{equation}{section}
\newtheorem{theorem}{Theorem}
\newtheorem{lemma}{Lemma}
\newtheorem{proposition}{Proposition}
\newtheorem{corollary}{Corollary}[theorem]
\newtheorem{remark}{Remark}
\allowdisplaybreaks

\section{ Introduction and Preliminaries}
Extensions, generalizations and unifications of Euler's Gamma function together with the set of related higher transcendental special functions were studied recently by Chaudhry and Zubair in  \cite{Ch-Zu}. In particular, Chaudhry and Zubair \cite[p. 100, Eqns. (4-5)]{Ch-Zu-94} presented the $p$--extension of the \emph{familiar}
incomplete Gamma functions $\gamma (\nu,x)$ and $\Gamma (\nu,x)$ by
\begin{equation}\label{Incomplete-gamma-p}
\gamma(\nu,x;p):=\int_0^x\, t^{\nu-1}\, {\rm e}^{-t- \frac{p}{t}} \;dt\qquad(\Re(p)>0;\, p=0,\,\, \Re(\nu)>0)
\end{equation}
and
\begin{equation}\label{Incomplete-Gamma-p}
\Gamma(\nu,x;p):=\int_x^\infty\, t^{\nu-1}\,{\rm e}^{-t- \frac{p}{t}}\;dt\qquad(\Re(p)>0),
\end{equation}
respectively, satisfying the following decomposition formula:
\begin{equation}\label{gamma+Gamma-function-p}
\gamma (s,x;p) + \Gamma (s,x;p)\equiv \Gamma_p (s)=\int_0^\infty\, t^{s-1}\, {\rm e}^{-t- \frac{p}{t}}\;dt=2p^{s/2}K_{s}\left(2\sqrt{p}\right) \qquad (\Re (p)>0),
\end{equation}
where $K_\nu(x)$ denotes the familiar modified Bessel function \cite{Ol-Lo-Bo}.

In this paper, we first introduce the family of the extended incomplete Pochhammer symbols  $ \left(\lambda; x ,p \right)_{\nu} $ and $ \left[\lambda; x ,p \right]_{\nu}$ by means of generalized incomplete Gamma functions $\gamma (s,x;p)$ and $\Gamma (s,x;p)$, respectively. We then derive its useful properties and make use of
it to define and investigate the family of the extended incomplete hypergeometric functions $\;_r\gamma_s^{p}(z)$ and $\;_r\Gamma_s^{p}(z)$ with $r$ numerator and
$s$ denominator parameters. For these extended incomplete hypergeometric functions, we derive various integral representations involving higher transcendental functions. We also derive a derivative formula, certain generating function relationships and fractional integrals (and derivatives) involving the extended incomplete hypergeometric functions. For other investigations on the subject and related areas, one may also refer to the works in \cite{Cent,Sr-Ch-Ag,Sr-Ce-Ki,RS2013-RJMP,SCho}.
\section{ The Extended Incomplete Pochhammer Symbols}
In terms of the generalized incomplete Gamma functions $\gamma (s,x;p)$ and $\Gamma (s,x;p)$ defined by \eqref{Incomplete-gamma-p} and \eqref{Incomplete-Gamma-p}, respectively, the new forms of the Pochhammer symbols
$ \left(\lambda; x ,p \right)_{\nu} $ and $ \left[\lambda; x ,p \right]_{\nu}$
$\;(\lambda; \,  \nu \in \mathbb{C};\;p\geqq  0)$
may be defined by
\begin{equation}\label{gips-p-1}
\left(\lambda;x,p\right)_{\nu} := \frac{\gamma (\lambda +\nu,x;p) }{\Gamma (\lambda)}\qquad(\lambda, \nu \in \mathbb{C}; \,\, p\geqq  0 )
\end{equation}
and
\begin{equation}\label{gips-p-2}
\left[\lambda;x,p\right]_{\nu} := \frac{\Gamma (\lambda +\nu,x;p) }{\Gamma (\lambda)}\qquad(\lambda, \nu \in \mathbb{C}; \,\,p\geqq  0 ).
\end{equation}
Obviously, then these new forms of the Pochhammer symbols $\left(\lambda;x,p \right)_{\nu}$ and $\left[\lambda;x,p \right]_{\nu}$ then satisfy the following decomposition relation:
\begin{equation}\label{gips-p-1+gips-p-2-symbol-p}
\left(\lambda;x;p \right)_{\nu} + \left[\lambda;x ;p \right]_{\nu}\equiv\left(\lambda;p\right)_{\nu}:=\left\{
\begin{array}{ll}
\displaystyle \frac{\Gamma_{p} (\lambda + \nu)}{\Gamma (\lambda )} & \qquad (\Re(p) > 0; \,\, \lambda, \,\, \nu  \in \mathbb{C})\\
\\
\int_0^\infty\, t^{\lambda+\nu-1}\, e^{-t-\frac{p}{t}}\,dt & \qquad (\Re(p) > 0).
\end{array}
\right.,
\end{equation}
where $(\lambda;p)_{\nu}$ is the generalized Pochhammer symbol (\cite[p. 485, Eqn. (8)]{Sr-Ce-Ki}).

In view of the relations \eqref{Incomplete-gamma-p}, \eqref{gips-p-1} and \eqref{Incomplete-Gamma-p}, \eqref{gips-p-2}, we can at once
have the integral representations of the new forms of the Pochhammer symbols contained in the following lemma.
\begin{lemma} For $\Re(p)>0$, we have the following integral representations
for $ \left(\lambda; x ,p \right)_{\nu} $ and $ \left[\lambda; x ,p \right]_{\nu}$:
\begin{equation}\label{gips-p-I-1}
\left(\lambda;x,p\right)_{\nu} := \frac{1}{\Gamma (\lambda)}\int_0^x\, t^{\lambda+\nu-1}\,e^{-t-\frac{p}{t}}\,dt
\end{equation}
and
\begin{equation}\label{gips-p-I-2}
\left[\lambda;x,p\right]_{\nu} := \frac{1}{\Gamma (\lambda)}\int_x^\infty\, t^{\lambda+\nu-1}\,e^{-t-\frac{p}{t}}\,dt.
\end{equation}
\end{lemma}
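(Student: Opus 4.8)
The plan is to simply unwind the definitions. Starting from the defining relation \eqref{gips-p-1}, namely $\left(\lambda;x,p\right)_{\nu} = \gamma(\lambda+\nu,x;p)/\Gamma(\lambda)$, I would substitute the integral form of the generalized incomplete Gamma function \eqref{Incomplete-gamma-p}, with its parameter $\nu$ replaced by $\lambda+\nu$, to obtain
\[
\left(\lambda;x,p\right)_{\nu} = \frac{1}{\Gamma(\lambda)}\,\gamma(\lambda+\nu,x;p) = \frac{1}{\Gamma(\lambda)}\int_0^x t^{\lambda+\nu-1}\,e^{-t-\frac{p}{t}}\,dt,
\]
which is precisely \eqref{gips-p-I-1}. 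In exactly the same way, combining \eqref{gips-p-2} with \eqref{Incomplete-Gamma-p} (again with $\nu \mapsto \lambda+\nu$) yields \eqref{gips-p-I-2}.

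The only point that needs a word of justification is the legitimacy of writing these integrals for arbitrary $\lambda,\nu\in\mathbb{C}$: near $t=0^+$ the integrand behaves like $t^{\Re(\lambda+\nu)-1}e^{-p/t}$, and since $\Re(p)>0$ the factor $e^{-p/t}$ decays faster than any power of $t$, so the integral in \eqref{gips-p-I-1} converges at the lower limit irrespective of the size of $\Re(\lambda+\nu)$; near $t=\infty$ the factor $e^{-t}$ secures convergence of the integral in \eqref{gips-p-I-2}. Thus both integral representations are well defined under the single hypothesis $\Re(p)>0$, exactly as asserted, and no restriction such as $\Re(\lambda+\nu)>0$ is needed (this is the feature that distinguishes the $p$-extended symbols from their classical counterparts).

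Since the entire argument is a direct substitution of the integral representations \eqref{Incomplete-gamma-p} and \eqref{Incomplete-Gamma-p} into \eqref{gips-p-1} and \eqref{gips-p-2}, there is essentially no obstacle; the "hard part," such as it is, amounts only to the convergence bookkeeping just described, which follows immediately from the $e^{-p/t}$ damping at the origin. The decomposition \eqref{gips-p-1+gips-p-2-symbol-p} can then be read off by adding \eqref{gips-p-I-1} and \eqref{gips-p-I-2} and recognizing the resulting integral over $(0,\infty)$ as $\Gamma_p(\lambda+\nu)/\Gamma(\lambda)$, though this is already recorded in the text preceding the lemma. $\blacksquare$
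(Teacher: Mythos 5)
Your proposal is correct and coincides with the paper's own (one-line) justification: the lemma follows by directly substituting the integral definitions \eqref{Incomplete-gamma-p} and \eqref{Incomplete-Gamma-p}, with the parameter shifted to $\lambda+\nu$, into the definitions \eqref{gips-p-1} and \eqref{gips-p-2}. The convergence remarks you add (the $e^{-p/t}$ damping at $t=0^+$ and the $e^{-t}$ damping at infinity) are a sound supplement that the paper leaves implicit.
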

Also, by writing the relations \eqref{gips-p-1} and \eqref{gips-p-2} as
$$\left(\lambda; x ,p \right)_{\nu}  = \frac{\Gamma(\lambda+\nu)}{\Gamma (\lambda)}\frac{ \gamma(x,p,\nu)}{\Gamma(\lambda+\nu)}
\qquad \text{and}\qquad
\left[\lambda; x ,p \right]_{\nu}  = \frac{\Gamma(\lambda+\nu)}{\Gamma (\lambda)} \frac{\Gamma[x,p,\nu]}{\Gamma(\lambda+\nu)},$$
we have the following lemma for the
extended incomplete Pochhammer symbols \eqref{gips-p-1} and \eqref{gips-p-2}.
\begin{lemma} Let $\lambda \in \mathbb{C}; m, n \in \mathbb{N}_{0}$ and $\Re(p)>0$, then for $ \left(\lambda; x ,p \right)_{\nu} $ and $ \left[\lambda; x ,p \right]_{\nu}$, we have
\begin{equation}\label{Pochhammer-relation}  
(\lambda ; x,p)_{n+m} = (\lambda)_{n}\; (\lambda +n ;x,p)_{m}
\end{equation}
and
\begin{equation}\label{Pochhammer-relation-2}  
[\lambda ; x,p]_{n+m} = (\lambda)_{n}\; [\lambda +n ;x,p]_{m}.
\end{equation}
\end{lemma}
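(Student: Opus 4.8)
The plan is to reduce both identities directly to the classical factorization $(\lambda)_{n+m}=(\lambda)_{n}\,(\lambda+n)_{m}$ of the ordinary Pochhammer symbol, exactly along the lines indicated in the display preceding the statement. First I would invoke the defining relation \eqref{gips-p-1}, which yields $(\lambda;x,p)_{n+m}=\gamma(\lambda+n+m,x;p)/\Gamma(\lambda)$. Since $n\in\mathbb{N}_{0}$, the elementary identity $1/\Gamma(\lambda)=(\lambda)_{n}/\Gamma(\lambda+n)$ holds, whence $(\lambda;x,p)_{n+m}=(\lambda)_{n}\,\gamma\bigl((\lambda+n)+m,x;p\bigr)/\Gamma(\lambda+n)$, and the last quotient is precisely $(\lambda+n;x,p)_{m}$ by a second application of \eqref{gips-p-1}. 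This establishes \eqref{Pochhammer-relation}.

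For \eqref{Pochhammer-relation-2} I would repeat the same three lines verbatim, with $\Gamma(s,x;p)$ in place of $\gamma(s,x;p)$ and \eqref{gips-p-2} in place of \eqref{gips-p-1}; equivalently, one may subtract \eqref{Pochhammer-relation} from the analogous recursion $(\lambda;p)_{n+m}=(\lambda)_{n}\,(\lambda+n;p)_{m}$ for the generalized Pochhammer symbol of \cite{Sr-Ce-Ki} and invoke the decomposition formula \eqref{gips-p-1+gips-p-2-symbol-p}. An alternative, self-contained route is to start instead from the integral representations \eqref{gips-p-I-1} and \eqref{gips-p-I-2} of Lemma 1 and again pull the constant factor $(\lambda)_{n}=\Gamma(\lambda+n)/\Gamma(\lambda)$ through the integral.

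The only delicate point worth a remark is that $\Gamma(\lambda)$ and $\Gamma(\lambda+n)$ have poles when $\lambda\in\mathbb{Z}_{\leqq 0}$; there I would interpret the symbols through the usual limiting/analytic-continuation convention, or simply prove the identity for $\lambda\notin\mathbb{Z}_{\leqq 0}$ and extend it by continuity in $\lambda$. Apart from this, the proof is a one-line algebraic manipulation and presents no genuine analytic obstacle — the substance of the lemma lies entirely in recognising that the $x$- and $p$-dependence is confined to the first argument of the incomplete Gamma function, so that the classical Pochhammer recursion transfers unchanged.
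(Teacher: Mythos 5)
Your argument is correct and is essentially the paper's own: the authors prove the lemma by rewriting \eqref{gips-p-1} and \eqref{gips-p-2} as $\left(\lambda;x,p\right)_{\nu}=\frac{\Gamma(\lambda+\nu)}{\Gamma(\lambda)}\cdot\frac{\gamma(\lambda+\nu,x;p)}{\Gamma(\lambda+\nu)}$ (and likewise with $\Gamma(\cdot,x;p)$), which is exactly your step $1/\Gamma(\lambda)=(\lambda)_{n}/\Gamma(\lambda+n)$ followed by recognising the quotient as $(\lambda+n;x,p)_{m}$. Your remark about the poles at $\lambda\in\mathbb{Z}_{\leqq 0}$ is a point the paper does not address, but it does not change the substance.
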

\section{ The Extended Incomplete Hypergeometric Functions}
In  terms of the extended incomplete forms of the Pochhammer symbols $(\lambda;x,p)_{\nu}$ and $[\lambda;x,p]_{\nu}$ defined, respectively, by \eqref{gips-p-1} and \eqref{gips-p-2}, we introduce two families of the extended incomplete generalized hypergeometric functions $_{r}\gamma_{s}^{p}(z)$ and $_{r}\Gamma_{s}^{p}(z)$, respectively, involving $r$-numerator and $s$-denominator parameters as follows:
\begin{equation}\label{eighf-1}
\;_{r}\gamma_{s}^{p}(z)=\,_{r}\gamma _{s} \left[\begin{array}{rr}
(\alpha_{1},x;p),\alpha_{2},\cdots,\alpha_{r};          \\
\beta_{1},\cdots,\beta_{s};    \\
\end{array} z \right] = \sum_{n=0}^{\infty} \frac{(\alpha_{1};x,p)_{n}(\alpha_{2})_{n}\cdots(\alpha_{r})_{n}}{(\beta_{1})_{n}\cdots(\beta_{s})_{n}} \;\frac{z^{n}}{n!}
\end{equation}
and
\begin{equation}\label{eighf-2}
\;_{r}\Gamma_{s}^{p}(z)=\,_{r}\Gamma _{s} \left[\begin{array}{rr}
(\alpha_{1},x;p),\alpha_{2},\cdots,\alpha_{r};          \\
\beta_{1},\cdots,\beta_{s};    \\
\end{array} z \right] = \sum_{n=0}^{\infty} \frac{[\alpha_{1};x,p]_{n}(\alpha_{2})_{n}\cdots(\alpha_{r})_{n}}{(\beta_{1})_{n}\cdots(\beta_{s})_{n}}\; \frac{z^{n}}{n!},
\end{equation}
where $\alpha_{1},\ldots,\,\alpha_{r} \in \mathbb{C}$
and $ \beta_{1},\ldots, \beta_{s}\in \mathbb{C} \setminus \mathbb{Z}^{-}_{0}$ provided that the series on the right-hand side of \eqref{eighf-1} and \eqref{eighf-2} converge.

Following \eqref{gips-p-1+gips-p-2-symbol-p}, these families of extended incomplete generalized hypergeometric functions satisfy the following decomposition formula:
\begin{align}\label{decom-pFq}
&{}_{r}\gamma _{s} \left[\begin{array}{rr}
(\alpha_{1},x;p),\alpha_{2},\ldots,\alpha_{r};          \\
\beta_{1},\ldots,\beta_{s};    \\
\end{array} z \right]
+{}_{r}\Gamma _{s} \left[\begin{array}{rr}
(\alpha_{1},x;p),\alpha_{2},\ldots,\alpha_{r};          \\
\beta_{1},\cdots,\beta_{s};    \\
\end{array} z \right]\notag \\
& \qquad \qquad= \sum_{n=0}^{\infty} \frac{(\alpha_{1};p)_{n}(\alpha_{2})_{n}\cdots(\alpha_{r})_{n}}{(\beta_{1})_{n}\cdots(\beta_{s})_{n}} \;\frac{z^{n}}{n!}  ={}_rF_{s} \left[\begin{array}{r}
(\alpha_{1},p),\alpha_{2},\ldots,\alpha_{r};          \\
\beta_{1},\ldots,\beta_{s};    \\
\end{array} z \right],
\end{align}
which is an extension of the generalized hypergeometric function $_{r}F_{s}(z)$ studied in (\cite[p. 487, Eq. (15)]{Sr-Ce-Ki}). In view of the decomposition formula \eqref{decom-pFq}, it is sufficient to discuss the properties and characteristics of the extended incomplete generalized hypergeometric functions $\;_{r}\Gamma_{s}^{p}(z)$. The corresponding extensions of the incomplete Gaussian hypergeometric function $_{2}\Gamma _{1}^{p}(z)$ and the confluent (Kummer's) hypergeometric function $_{1}\Gamma _{1}^{p}(z)$ can, respectively,  be expressed by
\begin{equation}\label{incomplete-Gauss-2f1}
_{2}\Gamma _{1}^{p}(z)=\;_{2}\Gamma _{1} \left[\begin{array}{rr}
(\alpha,x;p),\beta;          \\
\gamma;    \\
\end{array} z \right] = \sum_{n=0}^{\infty} \frac{[\alpha;x,p]_{n}(\beta)_{n}}{(\gamma)_{n}}\; \frac{z^{n}}{n!}
\end{equation}
and
\begin{equation}\label{incomplete-confluent-1f1}
_{1}\Gamma _{1}^{p}(z)=\;_{1}\Gamma _{1} \left[\begin{array}{rr}
(\alpha,x;p);          \\
\gamma;    \\
\end{array} z \right] = \sum_{n=0}^{\infty} \frac{[\alpha;x,p]_{n}}{(\gamma)_{n}}\; \frac{z^{n}}{n!}.
\end{equation}
\begin{theorem}\label{Theorem-1} The following integral representation for ${}_{r}\Gamma _{s}^{p}(z)$ defined by \eqref{eighf-2}
 holds true:
\begin{align}\label{integral-ighf-p}
&\;_{r}\Gamma _{s} \left[\begin{array}{rr}
(\alpha_{1},x;p),\alpha_{2},\cdots,\alpha_{r};          \\
\beta_{1},\cdots,\beta_{s};    \\
\end{array} z \right] = \frac{1}{\Gamma (a_{1})} \int_{x}^{\infty} t^{a_{1}-1}\;e^{-t-\frac{p}{t}} \; _{r-1}F_{s} \left[\begin{array}{rr}
\alpha_{2},\cdots,\alpha_{r};          \\
\beta_{1},\cdots,\beta_{s};    \\
\end{array} zt \right] dt
\end{align}
$$\big(\Re(p)>0;\;\;\Re (a_{1}) > 0 \;\text{when}\;p=0\;\text{and}\; x=0 \big).$$
\end{theorem}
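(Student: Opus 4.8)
The plan is to substitute the integral representation \eqref{gips-p-I-2} for the extended incomplete Pochhammer symbol $[\alpha_1;x,p]_n$ directly into the series definition \eqref{eighf-2} and then interchange the order of summation and integration. Concretely, from Lemma~1 we have
$$[\alpha_1;x,p]_n = \frac{1}{\Gamma(\alpha_1)}\int_x^\infty t^{\alpha_1+n-1}\,e^{-t-\frac{p}{t}}\,dt,$$
so that
$$\;_{r}\Gamma_{s}^{p}(z) = \sum_{n=0}^\infty \frac{(\alpha_2)_n\cdots(\alpha_r)_n}{(\beta_1)_n\cdots(\beta_s)_n}\,\frac{z^n}{n!}\cdot\frac{1}{\Gamma(\alpha_1)}\int_x^\infty t^{\alpha_1+n-1}\,e^{-t-\frac{p}{t}}\,dt.$$
Pulling the factor $t^n$ together with $z^n/n!$ and the remaining Pochhammer ratios under a single sum, one recognizes $\sum_{n=0}^\infty \frac{(\alpha_2)_n\cdots(\alpha_r)_n}{(\beta_1)_n\cdots(\beta_s)_n}\,\frac{(zt)^n}{n!} = {}_{r-1}F_s\!\left[\alpha_2,\ldots,\alpha_r;\beta_1,\ldots,\beta_s; zt\right]$, which yields \eqref{integral-ighf-p} after writing $a_1$ for $\alpha_1$.

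The key steps, in order, are: (i) recall \eqref{gips-p-I-2} and insert it into \eqref{eighf-2}; (ii) justify the interchange of $\sum_n$ and $\int_x^\infty$; (iii) recollect the resulting series inside the integrand as ${}_{r-1}F_s(zt)$; and (iv) check the stated parameter conditions, in particular that $\Re(a_1)>0$ is exactly what is needed to keep $\int_0^\infty t^{a_1-1}e^{-t}\,dt$-type contributions finite near $t=0$ in the limiting case $p=0$, $x=0$, while for $\Re(p)>0$ the factor $e^{-p/t}$ forces convergence at the lower limit regardless of $a_1$.

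The main obstacle is step (ii): the justification of term-by-term integration. For $\Re(p)>0$ the integrand decays like $e^{-p/t}$ as $t\to 0^+$ and like $e^{-t}t^{\Re(a_1)+n-1}$ as $t\to\infty$, so on the interval $[x,\infty)$ with $x>0$ (or $x=0$ together with $\Re(p)>0$) one can bound the partial sums by a fixed integrable majorant — e.g. dominate $\big|{}_{r-1}F_s(zt)\big|$ by a constant times $e^{C t}$ for suitable $C$ (valid when $r-1\le s$, or on any bounded region; one may also invoke absolute convergence of the ${}_{r-1}F_s$ series together with Tonelli/Fubini applied to the nonnegative integrand $t^{\Re(a_1)-1}e^{-t-\Re(p)/t}\sum_n |\cdots|\,|zt|^n/n!$) and then apply dominated convergence or Fubini's theorem. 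Once the interchange is legitimate, the remaining manipulations are the routine recollection of the hypergeometric series and a comparison of the convergence constraints, so I would present step (ii) carefully and treat (i), (iii), (iv) briefly.
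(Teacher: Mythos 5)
Your proposal follows exactly the paper's own argument: insert the integral representation \eqref{gips-p-I-2} into the series \eqref{eighf-2}, interchange summation and integration, and recollect the inner series as ${}_{r-1}F_{s}(zt)$. The only difference is that you take care to justify the interchange of $\sum_n$ and $\int_x^\infty$ via a dominated-convergence/Fubini argument, a step the paper simply asserts without comment.
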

\begin{proof}
Using the integral representation of the extended incomplete Pochhammer symbol $[a_{1};x,p]_{n}$ defined by \eqref{gips-p-I-2} and then changing the order of summation and integration, we obtain
\begin{align}\label{integral-ilf-2}
&\;_{r}\Gamma _{s} \left[\begin{array}{rr}
(\alpha_{1},x;p),\alpha_{2},\cdots,\alpha_{r};          \\
\beta_{1},\cdots,\beta_{s};    \\
\end{array} z \right] =\frac{1}{\Gamma(\alpha_{1})}\;\int_{x}^{\infty} t^{\alpha_{1}-1}e^{-t-\frac{p}{t}}\;
\sum_{n=0}^{\infty}\frac{(\alpha_{2})_{n}\cdots(\alpha_{r})_{n}}{(\beta_{1})_{n}\cdots(\beta_{s})_{n}} \frac{(zt)^{n}}{n!} dt,
\end{align}
which is precisely the second member of the assertion \eqref{integral-ighf-p}. This establishes Theorem \ref{Theorem-1}.
\end{proof}
\begin{theorem}\label{Theorem-2} The following integral representation for ${}_{r}\Gamma _{s}^{p}(z)$ defined by \eqref{eighf-2}
holds true:
\begin{align}\label{integral-ilf-3}
&\hspace{-20mm}\;_{r}\Gamma _{s} \left[\begin{array}{rr}
(\alpha_{1},x;p),\alpha_{2},\cdots,\alpha_{r-1},\beta;          \\
\beta_{1},\cdots,\beta_{s-1},\gamma;    \\
\end{array} z \right]= \frac{1}{B(\beta,\gamma-\beta)} \int_{0}^{1} t^{\beta-1} (1-t)^{\gamma-\beta-1}\;   \notag\\
&\hskip 35mm \times\;_{r-1}\Gamma_{s-1} \left[\begin{array}{rr}
(\alpha_{1},x;p),\alpha_{2},\cdots,\alpha_{r-1};          \\
\beta_{1},\cdots,\beta_{s-1};    \\
\end{array} zt \right] dt
\end{align}
$$\big(\Re(p)>0;\;\;\Re (\gamma)>\Re (\beta) > 0 \;\text{when}\; p=0 \,\,\text{and}\,\, x=0\big).$$
\end{theorem}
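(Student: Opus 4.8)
The plan is to reduce the claimed Eulerian-type integral representation to the classical beta-integral formula for the ordinary Pochhammer symbol, applied termwise to the series defining ${}_r\Gamma_s^p(z)$. Recall that for $\Re(\gamma)>\Re(\beta)>0$ one has the well-known identity
\[
\frac{(\beta)_n}{(\gamma)_n}=\frac{1}{B(\beta,\gamma-\beta)}\int_0^1 t^{\beta+n-1}(1-t)^{\gamma-\beta-1}\,dt,
\]
which isolates exactly the pair of parameters $(\beta,\gamma)$ that distinguish the two sides of \eqref{integral-ilf-3}. First I would write out the left-hand side using the series definition \eqref{eighf-2}, so that the generic term carries the factor $\dfrac{[\alpha_1;x,p]_n(\alpha_2)_n\cdots(\alpha_{r-1})_n(\beta)_n}{(\beta_1)_n\cdots(\beta_{s-1})_n(\gamma)_n}\,\dfrac{z^n}{n!}$. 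Next I would substitute the beta-integral expression for $(\beta)_n/(\gamma)_n$ into each term, pulling the $t^n$ inside the $z^n$ to form $(zt)^n$.

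The second step is to interchange the order of summation and integration, which is justified by absolute and uniform convergence of the series on the compact interval $[0,1]$ under the stated hypotheses (for $p=0$, $x=0$ the extra condition $\Re(\gamma)>\Re(\beta)>0$ guarantees integrability of the endpoint singularities, while the series in $zt$ converges in the same region as the series in $z$ since $t\in[0,1]$). After the interchange, the integrand becomes $t^{\beta-1}(1-t)^{\gamma-\beta-1}$ times the sum $\sum_{n=0}^\infty \dfrac{[\alpha_1;x,p]_n(\alpha_2)_n\cdots(\alpha_{r-1})_n}{(\beta_1)_n\cdots(\beta_{s-1})_n}\dfrac{(zt)^n}{n!}$, which is precisely ${}_{r-1}\Gamma_{s-1}^p(zt)$ by \eqref{eighf-2}. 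Dividing by $B(\beta,\gamma-\beta)$ then yields the right-hand side of \eqref{integral-ilf-3}, completing the proof.

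I do not expect any serious obstacle here: the argument is the standard Euler-type integral manipulation, and the only point requiring a word of care is the justification of the term-by-term integration, which follows routinely from dominated convergence once one observes that $|t^{\beta-1}(1-t)^{\gamma-\beta-1}|$ is integrable on $(0,1)$ and the hypergeometric series is dominated by its value at $|zt|\le |z|$ within its region of convergence. One could alternatively phrase the proof in reverse — starting from the right-hand side, expanding ${}_{r-1}\Gamma_{s-1}^p(zt)$, and integrating termwise using the beta integral — but the forward direction above is the cleaner presentation and mirrors the style of the proof of Theorem \ref{Theorem-1}.
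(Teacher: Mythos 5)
Your proposal is correct and follows essentially the same route as the paper's own proof: both rest on the beta-integral identity $(\beta)_n/(\gamma)_n = B(\beta+n,\gamma-\beta)/B(\beta,\gamma-\beta)$ applied termwise to the series \eqref{eighf-2}, followed by an interchange of summation and integration. The only difference is that you spell out the justification of the term-by-term integration, which the paper leaves implicit.
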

\begin{proof}
By considering the following elementary integral form of the Beta function $B(\alpha,\beta)$:
$$\frac{(\beta)_{n}}{(\gamma)_{n}}=\frac{B (\beta+n, \gamma - \beta)}{B (\beta, \gamma - \beta)}=\frac{1}{B (\beta, \gamma - \beta)} \int_{0}^{1} t^{\beta+n-1} (1 - t)^{\gamma - \beta - 1}\; dt ,$$
$$\big(\Re(\gamma)>\Re(\beta)>0\big)$$
in the left-hand side of \eqref{integral-ilf-3} and using \eqref{eighf-2}, we get the desired integral representation
\eqref{integral-ilf-3}.
\end{proof}
The following corollaries are easy consequences of the results  \eqref{incomplete-Gauss-2f1} and \eqref{incomplete-confluent-1f1}.
\begin{corollary} Each of the following integral representation for ${}_{2}\Gamma _{1}^{p}(z)$ and ${}_{1}\Gamma _{1}^{p}(z)$ in \eqref{incomplete-Gauss-2f1} and \eqref{incomplete-confluent-1f1}
 holds true:
\begin{align}\label{integral-ghf-p}
&\;_{2}\Gamma _{1} \left[\begin{array}{rr}
(\alpha,x;p),\beta;          \\
\gamma;    \\
\end{array} z \right] = \frac{1}{\Gamma (\alpha)} \int_{x}^{\infty}  t^{\alpha-1}\;e^{-t-\frac{p}{t}} \; _{1}F_{1}\left[ \begin{array}{r}
\beta;          \\
\gamma;          \\
\end{array} zt \right]dt,
\end{align}
\begin{align}\label{integral-chf-p}
&\;_{1}\Gamma _{1} \left[\begin{array}{rr}
(\alpha,x;p);          \\
\gamma;    \\
\end{array} z \right] = \frac{1}{\Gamma (\alpha)} \int_{x}^{\infty}  t^{\alpha-1}\;e^{-t-\frac{p}{t}} \; _{0}F_{1}\left[ \begin{array}{r}
\overline{\hspace{5mm}};          \\
\gamma;          \\
\end{array} zt \right]dt
\end{align}
and
\begin{align}\label{integral-ghf-p-1}
&\;_{2}\Gamma _{1} \left[\begin{array}{rr}
(\alpha,x;p),\beta;          \\
\gamma;    \\
\end{array} z \right] =\frac{1}{B(\beta,\gamma-\beta)} \int_{0}^{1} t^{\beta-1} (1-t)^{\gamma-\beta-1}\; _{1}\Gamma_{0} \left[\begin{array}{rr}
(\alpha,x;p);          \\
\overline{\hspace{5mm}};    \\
\end{array} zt \right] dt.
\end{align}
\end{corollary}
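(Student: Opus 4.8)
The plan is to obtain all three representations as immediate specializations of Theorems \ref{Theorem-1} and \ref{Theorem-2}, since the functions ${}_{2}\Gamma_{1}^{p}(z)$ and ${}_{1}\Gamma_{1}^{p}(z)$ in \eqref{incomplete-Gauss-2f1} and \eqref{incomplete-confluent-1f1} are precisely the cases $(r,s)=(2,1)$ and $(r,s)=(1,1)$ of the general function ${}_{r}\Gamma_{s}^{p}(z)$ defined by \eqref{eighf-2}. Alternatively one could repeat the short arguments used to prove Theorems \ref{Theorem-1} and \ref{Theorem-2} verbatim, inserting the integral representation \eqref{gips-p-I-2} of the symbol $[\alpha;x,p]_{n}$ (respectively the Beta-integral form of $(\beta)_{n}/(\gamma)_{n}$) into the defining series and interchanging summation and integration; but the specialization route is cleaner and is what I would write up.

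For \eqref{integral-ghf-p} I would set $r=2$, $s=1$ in Theorem \ref{Theorem-1}, identifying $\alpha_{1}=\alpha$, $\alpha_{2}=\beta$, $\beta_{1}=\gamma$. Then the inner hypergeometric factor ${}_{r-1}F_{s}$ on the right of \eqref{integral-ighf-p} becomes ${}_{1}F_{1}[\beta;\gamma;zt]$, which is exactly the assertion. Likewise, for \eqref{integral-chf-p} I would take $r=1$, $s=1$ with $\alpha_{1}=\alpha$, $\beta_{1}=\gamma$; here ${}_{r-1}F_{s}$ degenerates to the empty-numerator series ${}_{0}F_{1}[\,-\,;\gamma;zt]$, giving the stated formula. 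In both cases the convergence restriction $\Re(p)>0$ (with $\Re(\alpha)>0$ in the limiting case $p=0$, $x=0$) is inherited directly from Theorem \ref{Theorem-1}.

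For \eqref{integral-ghf-p-1} I would invoke Theorem \ref{Theorem-2} with $r=2$, $s=1$, so that the ``last'' numerator and denominator parameters there are precisely $\beta$ and $\gamma$, while the remaining parameter list reduces to the single incomplete numerator parameter $(\alpha,x;p)$ with no denominator parameters at all. The function ${}_{r-1}\Gamma_{s-1}^{p}$ occurring under the integral in \eqref{integral-ilf-3} therefore collapses to ${}_{1}\Gamma_{0}[(\alpha,x;p);\,-\,;zt]$, and \eqref{integral-ilf-3} becomes exactly \eqref{integral-ghf-p-1}, with the condition $\Re(\gamma)>\Re(\beta)>0$ (for $p=0$, $x=0$) carried over from Theorem \ref{Theorem-2}.

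Since each of the three assertions is merely a transcription of an already-proved theorem to a particular choice of $(r,s)$, there is no substantive obstacle. The only point demanding a moment's care is the bookkeeping of which parameters in \eqref{eighf-2} play which role in the reduced ${}_{r-1}F_{s}$ and ${}_{r-1}\Gamma_{s-1}^{p}$ on the right-hand sides, together with the trivial check that in each specialization no numerator or denominator parameter list becomes empty in an ill-defined way (an empty numerator list is legitimate, as in ${}_{0}F_{1}$ and ${}_{1}\Gamma_{0}$, while all denominator parameters remain in $\mathbb{C}\setminus\mathbb{Z}_{0}^{-}$).
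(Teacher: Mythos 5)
Your proposal is correct and matches the paper's intent: the paper offers no separate proof, simply noting that the corollary is an ``easy consequence,'' namely the specializations $(r,s)=(2,1)$ and $(r,s)=(1,1)$ of Theorem \ref{Theorem-1} for \eqref{integral-ghf-p} and \eqref{integral-chf-p}, and $(r,s)=(2,1)$ of Theorem \ref{Theorem-2} for \eqref{integral-ghf-p-1}. Your parameter bookkeeping and the inherited convergence conditions are exactly what the paper relies on.
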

The well known Laguerre polynomial $L_{n}^{(\alpha)}(x)$ of order (index) $\alpha$ and degree $n$ in $x$, the incomplete Gamma function $\gamma (k,x)$ and the Bessel function $J_{\nu} (z) $ are, respectively, expressible in terms of the hypergeometric functions by
(see, \emph{e.g.}, ~\cite{Ol-Lo-Bo})
\begin{equation}\label{Laguerre-pFq}
L_{n}^{(\alpha)}(x)=\frac{(\alpha+1)_{n}}{n!}\;_{1}F_{1} (-n; \alpha +1;x),
\end{equation}
\begin{equation}\label{gamma-pFq}
\;_{1}F_{1} (\kappa; \kappa+1;-x) = \kappa x^{-\kappa}\; \gamma (\kappa,x)
\end{equation}
and
\begin{equation}\label{J-pFq}
J_{\nu} (z) = \frac{(\frac{z}{2})^{\nu}}{\Gamma (\nu + 1)} \; _{0}F_{1} \left(\overline{\hspace{5mm}}\;; \nu+1; -\frac{1}{4}z^{2} \right) \qquad (\nu\in \mathbb{C}\setminus \mathbb{Z}^{-}).
\end{equation}
Now, applying the relationships \eqref{Laguerre-pFq} and \eqref{gamma-pFq}  to \eqref{integral-ghf-p}
and \eqref{J-pFq} to  \eqref{integral-chf-p},
we obtain integral representations for the extended incomplete hypergeometric functions defined by
\eqref{incomplete-Gauss-2f1} and \eqref{incomplete-confluent-1f1}, which are asserted by the Corollary \ref{cor-1} below.
\begin{corollary}\label{cor-1} Each of the following integral representations hold true:
\begin{align}\label{Lau-integral}  
&\;_{2}\Gamma _{1} \left[\begin{array}{rr}
(\alpha,x;p),-m;          \\
\gamma+1;    \\
\end{array} z \right]= \frac{m!}{ (\gamma+1)_{m}\Gamma (\alpha)} \int_{x}^{\infty} t^{\alpha-1} \;e^{-t-\frac{p}{t}} \;\; L_{m}^{(\gamma)}(zt)dt
\end{align}
\begin{align}\label{gamma-integral}
&\;_{2}\Gamma _{1} \left[\begin{array}{rr}
(\alpha,x;p),\beta;          \\
\beta+1;    \\
\end{array} -z \right] = \frac{\beta\;z^{-\beta}}{\Gamma (\alpha)} \int_{x}^{\infty}t^{\alpha-\beta-1}\;e^{-t-\frac{p}{t}} \;\; \gamma(\beta,zt)\;dt,
\end{align}
and
\begin{align}\label{J-integral}  
&\;_{1}\Gamma _{1} \left[\begin{array}{rr}
(\alpha,x;p);          \\
\gamma+1;    \\
\end{array} -z \right]= \frac{\Gamma (\gamma+1)}{\Gamma (\alpha)} z^{-\frac{\gamma}{2}} \int_{x}^{\infty } t^{\alpha-\frac{\gamma}{2}-1} \;e^{-t-\frac{p}{t}} \;\; J_{\gamma} (2 \sqrt{zt}) dt
\end{align}
provided that the integrals involved are convergent.
\end{corollary}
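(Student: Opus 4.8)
The plan is to derive all three representations directly from the integral formulas \eqref{integral-ghf-p} and \eqref{integral-chf-p} recorded just above, by specializing the parameters and then replacing the inner ${}_{1}F_{1}$ (respectively ${}_{0}F_{1}$) by the classical special function it equals. No fresh interchange of summation and integration is needed, since that was already justified in the proofs of Theorems \ref{Theorem-1} and \ref{Theorem-2}; only the convergence of the resulting integrals near $t=0$ (in the case $p=0$, $x=0$) deserves a passing remark, and this is inherited from the hypotheses on the exponents of $t$.

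For \eqref{Lau-integral}, I would put $\beta=-m$ and replace $\gamma$ by $\gamma+1$ in \eqref{integral-ghf-p}, so that the integrand contains ${}_{1}F_{1}(-m;\gamma+1;zt)$. Solving \eqref{Laguerre-pFq} for the hypergeometric function gives ${}_{1}F_{1}(-m;\gamma+1;zt)=\frac{m!}{(\gamma+1)_{m}}\,L_{m}^{(\gamma)}(zt)$; substituting this and pulling the constant factor outside the integral yields \eqref{Lau-integral}. Here the ${}_{2}\Gamma_{1}^{p}$ series terminates, so no condition on $z$ is needed.

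For \eqref{gamma-integral}, I would set $\gamma=\beta+1$ and replace $z$ by $-z$ in \eqref{integral-ghf-p}, producing ${}_{1}F_{1}(\beta;\beta+1;-zt)$ under the integral. Applying \eqref{gamma-pFq} with $\kappa=\beta$ and argument $zt$ gives ${}_{1}F_{1}(\beta;\beta+1;-zt)=\beta\,(zt)^{-\beta}\,\gamma(\beta,zt)$; factoring $z^{-\beta}$ out of the integral and combining $t^{-\beta}$ with $t^{\alpha-1}$ changes the exponent of $t$ to $\alpha-\beta-1$, which is exactly \eqref{gamma-integral}. Since $t^{-\beta}\gamma(\beta,zt)$ stays bounded as $t\to0^{+}$, convergence under $\Re(\alpha)>0$ is preserved.

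Finally, for \eqref{J-integral}, I would replace $\gamma$ by $\gamma+1$ and $z$ by $-z$ in \eqref{integral-chf-p}, so the integrand contains ${}_{0}F_{1}(\overline{\hspace{5mm}};\gamma+1;-zt)$. Taking $\nu=\gamma$ in \eqref{J-pFq} and matching $-\tfrac14 z^{2}\leftrightarrow -zt$, i.e. argument $2\sqrt{zt}$, gives $J_{\gamma}(2\sqrt{zt})=\frac{(zt)^{\gamma/2}}{\Gamma(\gamma+1)}\,{}_{0}F_{1}(\overline{\hspace{5mm}};\gamma+1;-zt)$, whence ${}_{0}F_{1}(\overline{\hspace{5mm}};\gamma+1;-zt)=\Gamma(\gamma+1)\,(zt)^{-\gamma/2}\,J_{\gamma}(2\sqrt{zt})$; inserting this and extracting $z^{-\gamma/2}$ together with $t^{-\gamma/2}$ from under the integral gives \eqref{J-integral}. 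The main point to watch throughout is merely the bookkeeping of the powers of $z$ and $t$ and of the normalizing constants $m!/(\gamma+1)_{m}$, $\beta$ and $\Gamma(\gamma+1)$; none of this is deep, so I anticipate no real obstacle beyond keeping the substitutions consistent.
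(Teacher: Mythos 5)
Your proposal is correct and follows exactly the route the paper itself indicates: specialize the parameters in \eqref{integral-ghf-p} and \eqref{integral-chf-p}, then invert the classical identities \eqref{Laguerre-pFq}, \eqref{gamma-pFq} and \eqref{J-pFq} to replace the inner ${}_{1}F_{1}$ or ${}_{0}F_{1}$ by $L_{m}^{(\gamma)}$, $\gamma(\beta,\cdot)$ and $J_{\gamma}$ respectively. The bookkeeping of the powers of $z$ and $t$ and of the normalizing constants in your three substitutions all checks out against the stated formulas.
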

Since the incomplete gamma function $\gamma(\alpha,x)$ is connected to the error function $\mathrm{erf}(\sqrt{z})$ by means of the relation
(see, \emph{e.g.}, ~\cite{Ol-Lo-Bo}):
\begin{equation}\label{error}
\gamma\left(\frac{1}{2},x\right)=\sqrt{\pi} \;\mathrm{erf}(\sqrt{z}),
\end{equation}
therefore, by applying the relationship \eqref{error}  to \eqref{gamma-integral}
with $\beta=\frac{1}{2}$, we get the following integral representation for the extended incomplete hypergeometric function
\eqref{incomplete-Gauss-2f1}.
\begin{corollary}\label{Gam-2}
  The following integral relationship in terms of the error function $\mathrm{erf}(z)$
holds true:
\begin{align}\label{error-integral}
&\;_{2}\Gamma _{1} \left[\begin{array}{rr}
(\alpha,x;p),\frac{1}{2};          \\
\frac{3}{2};    \\
\end{array} -z \right] = \frac{1}{\Gamma (\alpha)}\sqrt{\frac{\pi}{z}} \int_{x}^{\infty}t^{\alpha-\frac{3}{2}}\;e^{-t-\frac{p}{t}} \;\;  \mathrm{erf}(\sqrt{zt}) \;dt
\end{align}
\end{corollary}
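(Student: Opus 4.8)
The plan is to obtain \eqref{error-integral} as a direct specialization of the integral representation \eqref{gamma-integral} of Corollary \ref{cor-1}, along the lines already indicated in the text preceding the statement. First I would put $\beta=\tfrac12$ in \eqref{gamma-integral}. On the left this sends the parameter pair $(\beta,\beta+1)$ to $(\tfrac12,\tfrac32)$, so the left-hand side becomes precisely the function ${}_2\Gamma_1^p$ that appears on the left of \eqref{error-integral}; on the right the exponent $t^{\alpha-\beta-1}$ becomes $t^{\alpha-3/2}$ and the outer constant $\beta z^{-\beta}/\Gamma(\alpha)$ becomes $\tfrac12\,z^{-1/2}/\Gamma(\alpha)$, while the integrand still carries the lower incomplete Gamma function $\gamma(\tfrac12,zt)$.

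Next I would invoke the classical identity \eqref{error}, which after the substitution $u=\sqrt{\tau}$ in the defining integral of $\gamma$ gives $\gamma(\tfrac12,zt)=\sqrt{\pi}\,\mathrm{erf}(\sqrt{zt})$. Replacing $\gamma(\tfrac12,zt)$ by $\sqrt{\pi}\,\mathrm{erf}(\sqrt{zt})$ in the integrand and absorbing the elementary constants $z^{-1/2}$ and $\sqrt{\pi}$ into a single factor $\tfrac{1}{\Gamma(\alpha)}\sqrt{\pi/z}$ in front of the integral then yields exactly \eqref{error-integral}. No manipulation of the underlying series is required here, since the analytic work — interchanging summation and integration and identifying the relevant ${}_1F_1$ with an incomplete Gamma function — was already done in establishing \eqref{gamma-integral}.

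Because the argument is a pure specialization, I do not expect a substantive obstacle. The only points deserving attention are: verifying that $\beta=\tfrac12$ lies in the range of validity of \eqref{gamma-integral} (equivalently, of the interchange of summation and integration behind Corollary \ref{cor-1}), and checking convergence of the improper integral $\int_x^\infty t^{\alpha-3/2}e^{-t-p/t}\,\mathrm{erf}(\sqrt{zt})\,dt$. Convergence is automatic when $\Re(p)>0$ — the factor $e^{-p/t}$ controls the integrand near $t=0$, while $e^{-t}$ together with the boundedness of $\mathrm{erf}$ controls it as $t\to\infty$ — and in the limiting case $p=0$ one additionally needs $x>0$ or $\Re(\alpha)>\tfrac12$, which I would record alongside the statement. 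I would also double-check the numerical constant in the prefactor of \eqref{error-integral} against the factor $\tfrac12$ produced by $\beta z^{-\beta}$ at $\beta=\tfrac12$.
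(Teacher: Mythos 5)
Your proof follows exactly the paper's own derivation: specialize \eqref{gamma-integral} to $\beta=\tfrac12$ and apply \eqref{error}. Your suspicion about the prefactor is justified: since $\beta z^{-\beta}\big|_{\beta=1/2}\cdot\sqrt{\pi}=\tfrac12\sqrt{\pi/z}$, the correct constant is $\tfrac{1}{2\Gamma(\alpha)}\sqrt{\pi/z}$, and the corollary as printed in \eqref{error-integral} is missing the factor $\tfrac12$.
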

\begin{theorem} The following derivative formulas hold true:
\begin{align}\label{derivative-formula}
&\hspace{-4mm}\frac{d^{n}}{d z^{n}}\left\{\;_{r}\Gamma _{s} \left[\begin{array}{rr}
(\alpha_{1},x;p),\alpha_{2},\cdots,\alpha_{r};          \\
\beta_{1},\cdots,\beta_{s};    \\
\end{array} z \right]\right\} \notag \\
&\qquad= \frac{(\alpha_{1})_{n}\cdots(\alpha_{r})_{n}}{(\beta_{1})_{n}\cdots(\beta_{s})_{n}}\; _{r}\Gamma _{s} \left[\begin{array}{rr}
(\alpha_{1}+n,x;p),\alpha_{2}+n,\cdots,\alpha_{r}+n;          \\
\beta_{1}+n,\cdots,\beta_{s}+n;    \\
\end{array} z \right].
\end{align}
provided that each member of the assertion \eqref{derivative-formula} exists.
\end{theorem}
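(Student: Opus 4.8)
The plan is to prove the case $n=1$ directly from the series definition \eqref{eighf-2} and then obtain arbitrary $n$ by induction. First I would differentiate
$$\;_{r}\Gamma _{s}^{p}(z)=\sum_{k=0}^{\infty}\frac{[\alpha_{1};x,p]_{k}(\alpha_{2})_{k}\cdots(\alpha_{r})_{k}}{(\beta_{1})_{k}\cdots(\beta_{s})_{k}}\;\frac{z^{k}}{k!}$$
term by term, which is legitimate on any compact subset of the region of convergence since a power series is freely differentiable inside its domain of convergence. The $k=0$ term disappears, and after the reindexing $k\mapsto k+1$ one is left with a sum over $k\geq 0$ whose general term involves $[\alpha_{1};x,p]_{k+1}$, the factors $(\alpha_{j})_{k+1}$ for $2\leq j\leq r$, and the factors $(\beta_{j})_{k+1}$ for $1\leq j\leq s$.

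The decisive step is then to invoke the shift relation \eqref{Pochhammer-relation-2} for the extended incomplete Pochhammer symbol in the special case $m=k$, $n=1$, namely $[\alpha_{1};x,p]_{k+1}=\alpha_{1}\,[\alpha_{1}+1;x,p]_{k}$, together with the elementary identities $(\alpha_{j})_{k+1}=\alpha_{j}(\alpha_{j}+1)_{k}$ and $(\beta_{j})_{k+1}=\beta_{j}(\beta_{j}+1)_{k}$ for the ordinary Pochhammer symbols. Pulling the constants $\alpha_{1}\cdots\alpha_{r}$ and $1/(\beta_{1}\cdots\beta_{s})$ out of the summation, the residual series is exactly $\;_{r}\Gamma _{s}$ with every parameter (numerator and denominator, including the incomplete one) advanced by one, which is \eqref{derivative-formula} for $n=1$ since $(\alpha_{j})_{1}=\alpha_{j}$ and $(\beta_{j})_{1}=\beta_{j}$.

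For the inductive step I would assume \eqref{derivative-formula} for some $n$, differentiate that identity once more, and apply the $n=1$ case to the shifted function $\;_{r}\Gamma _{s}\big[(\alpha_{1}+n,x;p),\alpha_{2}+n,\ldots,\alpha_{r}+n;\beta_{1}+n,\ldots,\beta_{s}+n;z\big]$. This produces the additional factor $\dfrac{(\alpha_{1}+n)\cdots(\alpha_{r}+n)}{(\beta_{1}+n)\cdots(\beta_{s}+n)}$ and advances each parameter by one further unit. Collapsing the products via $(\alpha_{j})_{n}(\alpha_{j}+n)=(\alpha_{j})_{n+1}$ and $(\beta_{j})_{n}(\beta_{j}+n)=(\beta_{j})_{n+1}$ then yields precisely \eqref{derivative-formula} with $n$ replaced by $n+1$.

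The only genuinely delicate point is the interchange of differentiation and summation; I expect this to be the sole obstacle, and it is dispatched by the standard theorem that a convergent power series may be differentiated termwise within its region of convergence — the clause ``provided that each member of the assertion exists'' already excludes the degenerate cases. Everything else is bookkeeping with Pochhammer symbols, the essential algebraic input being the incomplete shift formula \eqref{Pochhammer-relation-2}.
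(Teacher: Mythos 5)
Your proposal is correct and follows essentially the same route as the paper: term-by-term differentiation of the series \eqref{eighf-2}, reindexing, applying the shift identity for the extended incomplete Pochhammer symbol to pull out the factor $\alpha_{1}\cdots\alpha_{r}/(\beta_{1}\cdots\beta_{s})$, and then iterating $n$ times. If anything you are slightly more careful than the paper, which justifies the iteration only by the phrase ``repeated procedure $n$-times'' and cites \eqref{Pochhammer-relation} where the bracket version \eqref{Pochhammer-relation-2} is the one actually needed, as you correctly use.
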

\begin{proof} Differentiating \eqref{eighf-2} with respect to $z$ and then replacing $n\mapsto n+1$ in the right-hand side term, we get
\begin{align}\label{D-Gauss-pFq-1}
&\frac{d}{dz} \left\{\;_{r}\Gamma _{s} \left[\begin{array}{rr}
(\alpha_{1},x;p),\alpha_{2},\cdots,\alpha_{r};          \\
\beta_{1},\cdots,\beta_{s};    \\
\end{array} z \right]\right\} =\sum_{n=0}^{\infty} \frac{[\alpha_{1};x,p]_{n+1}(\alpha_{2})_{n+1}\cdots(\alpha_{r})_{n+1}}{(\beta_{1})_{n+1}\cdots(\beta_{s})_{n+1}}\; \frac{z^{n}}{n!}\notag \\
&\qquad\qquad  \hskip 20mm= \frac{\alpha_{1}\alpha_{2}\cdots \alpha_{r}}{\beta_{1}\beta_{2}\cdots \beta_{s}}\;_{r}\Gamma _{s} \left[\begin{array}{rr}
(\alpha_{1}+1,x;p),\alpha_{2}+1,\cdots,\alpha_{r}+1;          \\
\beta_{1}+1,\cdots,\beta_{s}+1;    \\
\end{array} z \right],
\end{align}
where we have used the identity \eqref{Pochhammer-relation}. Repeated procedure $n$-times gives the formula \eqref{derivative-formula}.
\end{proof}
\section{ Certain Generating Functions}
In order to derive certain generating functions, we find it to be convenient to choose the abbreviated notation $\Delta (N; \lambda )$ which stands for the array of $N$-parameters:
$$ \frac{\lambda}{N}, \frac{\lambda+1}{N},\cdots, \frac{\lambda+N-1}{N} \qquad (\lambda \in \mathbb{C};\, N \in \mathbb{N}),$$
the array $\Delta(N; \lambda )$ is understood to be empty when $N = 0$. We first establish the following generating function. For $p=x=0$, the corresponding deduced results below are mentioned in \cite{Sr-Ma}.
\begin{theorem}\label{GF-1} Let $p\geqq0;\;\mid t \mid < 1; \lambda \in \mathbb{C}; N \in \mathbb{N} $, then
\begin{align}\label{Generating-function-1}  
&\sum\limits_{n=0}^{\infty} \frac{(\lambda )_{n}}{n!} \, _{r+N}\Gamma _{s}\left[\begin{array}{rr}
\Delta (N; \lambda +n), (\alpha_{1},x;p),\alpha_{2},\cdots,\alpha_{r};  \\
     \beta_{1},\cdots,\beta_{s};  \\
\end{array} z \right] t^{n}\notag\\
&\hskip 30mm =  (1-t)^{-\lambda}\,  _{r+N}\Gamma_{s} \left[\begin{array}{rrr}
\Delta (N; \lambda), (\alpha_{1},x;p),\alpha_{2},\cdots,\alpha_{r};   \\
        \beta_{1},\cdots,\beta_{s};  \\
\end{array} \frac{z}{(1-t)^{N}} \right]
\end{align}
provided that each member of \eqref{Generating-function-1} exists.
\end{theorem}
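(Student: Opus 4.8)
The plan is to expand the inner function ${}_{r+N}\Gamma_s$ on the left of \eqref{Generating-function-1} as a power series and then rearrange the resulting double sum. The central tool is the Gauss--Legendre multiplication formula for the Pochhammer symbol,
$$\prod_{j=0}^{N-1}\left(\frac{\mu+j}{N}\right)_{k}=\frac{(\mu)_{Nk}}{N^{Nk}}\qquad(\mu\in\mathbb{C};\ k\in\mathbb{N}_0;\ N\in\mathbb{N}),$$
which shows that the array $\Delta(N;\lambda+n)$ contributes exactly the factor $(\lambda+n)_{Nk}\,N^{-Nk}$ to the $k$-th term of the series \eqref{eighf-2} defining ${}_{r+N}\Gamma_s$. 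Substituting this series for each ${}_{r+N}\Gamma_s$ on the left-hand side of \eqref{Generating-function-1}, that side becomes the double series
$$\sum_{n=0}^{\infty}\sum_{k=0}^{\infty}\frac{(\lambda)_n\,(\lambda+n)_{Nk}}{N^{Nk}\,n!}\;\frac{[\alpha_1;x,p]_k(\alpha_2)_k\cdots(\alpha_r)_k}{(\beta_1)_k\cdots(\beta_s)_k}\;\frac{z^k}{k!}\;t^n.$$

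First I would justify interchanging the two summations. Using the elementary identities $(\lambda)_n(\lambda+n)_{Nk}=(\lambda)_{n+Nk}=(\lambda)_{Nk}(\lambda+Nk)_n$, and noting from \eqref{gips-p-1+gips-p-2-symbol-p} that $[\alpha_1;x,p]_k$ is majorized by $|(\alpha_1;p)_k|$ and hence has the same growth order in $k$ as the ordinary Pochhammer symbol $(\alpha_1)_k$, the above double series is dominated termwise, for $|t|<1$ and for $z$ in the range where $z/(1-t)^N$ lies within the disc of convergence of ${}_{r+N}\Gamma_s$ (i.e.\ exactly where the right-hand side of \eqref{Generating-function-1} is defined), by the product of two absolutely convergent series. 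Hence Fubini's theorem for series permits the interchange; pulling the $k$-summation outside gives
$$\sum_{k=0}^{\infty}\frac{(\lambda)_{Nk}}{N^{Nk}}\;\frac{[\alpha_1;x,p]_k(\alpha_2)_k\cdots(\alpha_r)_k}{(\beta_1)_k\cdots(\beta_s)_k}\;\frac{z^k}{k!}\left(\sum_{n=0}^{\infty}\frac{(\lambda+Nk)_n}{n!}\,t^n\right).$$

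Next I would evaluate the inner sum by the binomial series, $\sum_{n\ge0}(\lambda+Nk)_n t^n/n!=(1-t)^{-\lambda-Nk}=(1-t)^{-\lambda}\bigl((1-t)^{-N}\bigr)^{k}$, absorb the factor $(1-t)^{-Nk}$ into $z^k$, and then apply the multiplication formula in the reverse direction to rewrite $(\lambda)_{Nk}N^{-Nk}$ as $\prod_{j=0}^{N-1}\bigl((\lambda+j)/N\bigr)_k$, which is the array $\Delta(N;\lambda)$. This yields
$$(1-t)^{-\lambda}\;{}_{r+N}\Gamma_s\left[\begin{array}{r}\Delta(N;\lambda),(\alpha_1,x;p),\alpha_2,\cdots,\alpha_r;\\ \beta_1,\cdots,\beta_s;\end{array}\frac{z}{(1-t)^N}\right],$$
which is precisely the right-hand side of \eqref{Generating-function-1}. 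The only genuinely delicate point is the interchange of summations; everything else is the sequence of standard Pochhammer-symbol identities above together with the binomial theorem, and the extended incomplete Pochhammer factor $[\alpha_1;x,p]_k$ enters only as an inert coefficient, so it introduces no new convergence difficulty. Specializing to $p=x=0$ recovers the generating function recorded in \cite{Sr-Ma}.
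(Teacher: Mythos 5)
Your argument is correct and follows essentially the same route as the paper's own proof: expand the inner ${}_{r+N}\Gamma_{s}$ series, interchange the two summations using $(\lambda)_{n}(\lambda+n)_{Nk}=(\lambda)_{Nk}(\lambda+Nk)_{n}$, sum the inner series by the binomial theorem, and reassemble via the multiplication formula. You are in fact somewhat more careful than the paper, which silently absorbs the $N^{-Nk}$ factors and asserts the interchange of summations without justification.
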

\begin{proof} Let $\mathcal{S}$ be the left-hand side of \eqref{Generating-function-1}. Using \eqref{eighf-2}, we have
\begin{align}\label{Generating-function-1-a}
&\mathcal{S}=\sum_{n=0}^{\infty}\frac{(\lambda)_n}{n!} \left( \sum_{m=0}^{\infty} \frac{(\lambda+n)_{Nm}[\alpha_{1};x,p]_{m}(\alpha_{2})_{m}\cdots(\alpha_{r})_{m}}{(\beta_{1})_{m}\cdots(\beta_{s})_{m}}\;  \frac{z^{m}}{m!}\right) t^n\notag\\
&\hskip 5mm=\sum_{m=0}^{\infty}\frac{(\lambda)_{Nm}[\alpha_{1};x,p]_{m}(\alpha_{2})_{m}\cdots(\alpha_{r})_{m}}{(\beta_{1})_{m}
\cdots(\beta_{s})_{m}}\;  \frac{z^{m}}{m!}\left(\sum_{n=0}^{\infty} (\lambda+Nm)_{n}\frac{t^n}{n!}\right),
\end{align}
where we have changed the order of summation and used the identity:
$$(\lambda)_{n}(\lambda+n)_{Nm}=(\lambda)_{Nm}(\lambda+Nm)_{n}.$$
Applying now the binomial summation:
$$(1-t)^{-\lambda-Nm}=\sum_{n=0}^{\infty}\frac{(\lambda+Nm)_{n}}{n!}t^n \qquad (|t|<1)$$
in \eqref{Generating-function-1-a} and using again \eqref{eighf-2}, we arrive at the desired result \eqref{Generating-function-1} of Theorem \ref{GF-1}.
\end{proof}
\begin{theorem}\label{GF-2} Each of the following generating functions hold true:
\begin{align}\label{Generating-function-2}  
&\sum\limits_{n=0}^{\infty} \frac{(\lambda )_{n}}{n!} \, _{r+N}\Gamma_{s} \left[\begin{array}{rr}
\Delta (N; -n), (\alpha_{1},x;p),\alpha_{2},\cdots,\alpha_{r};  \\
     \beta_{1},\cdots,\beta_{s};  \\
\end{array} z \right] t^{n}\notag \\
&\hskip 25mm=  (1-t)^{-\lambda} \,  _{r+N}\Gamma_{s} \left[\begin{array}{rr}
\Delta (N; \lambda), (\alpha_{1},x;p),\alpha_{2},\cdots,\alpha_{r};  \\
     \beta_{1},\cdots,\beta_{s};  \\
\end{array} z \left(-\frac{t}{(1-t)} \right)^{N} \right],
\end{align}
\begin{align}\label{Generating-function-3}  
&\sum\limits_{n=0}^{\infty} \frac{(\lambda )_{n}}{n!} \, _{r+2N}\Gamma_{s} \left[\begin{array}{rr}
\Delta (N; -n), \Delta (N; \lambda+n), (\alpha_{1},x;p),\alpha_{2},\cdots,\alpha_{r};  \\
     \beta_{1},\cdots,\beta_{s};  \\
\end{array} z \right] t^{n}\notag \\
&\hskip 25mm= (1-t)^{-\lambda} \,  _{r+2N}\Gamma_{s} \left[\begin{array}{rr}
\Delta (2N; \lambda), (\alpha_{1},x;p),\alpha_{2},\cdots,\alpha_{r};  \\
     \beta_{1},\cdots,\beta_{s};  \\
\end{array} z \left(-\frac{4t}{(1-t)^{2}} \right)^{N} \right]
\end{align}
and
\begin{align}\label{Generating-function-4}  
&\sum\limits_{n=0}^{\infty} \frac{(\lambda )_{n}}{n!} \, _{r+N}\Gamma_{s+N} \left[\begin{array}{rr}
\Delta (N; -n), (\alpha_{1},x;p),\alpha_{2},\cdots,\alpha_{r};  \\
\Delta (N; 1-\lambda-n), \beta_{1},\cdots,\beta_{s};  \\
\end{array} z \right] t^{n}\notag \\
&\hskip 30mm=  (1-t)^{-\lambda} \,  _{r}\Gamma_{s} \left[\begin{array}{rr}
(\alpha_{1},x;p),\alpha_{2},\cdots,\alpha_{r};  \\
     \beta_{1},\cdots,\beta_{s};  \\
\end{array} z t^{N} \right],
\end{align}
provided that $\left(p\geqq0;\;\mid t \mid < 1; \lambda \in \mathbb{C}; N \in \mathbb{N}\right)$ such that each member of the assertions \eqref{Generating-function-2} to \eqref{Generating-function-4} exist.
\end{theorem}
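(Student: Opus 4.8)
The plan is to mirror, for each of the three generating functions, the two-step strategy already used successfully in the proof of Theorem \ref{GF-1}: first expand the left-hand side by \eqref{eighf-2}, interchange the order of the (absolutely convergent) double summation, and then recognize the resulting inner $n$-sum as a binomial expansion of a power of $(1-t)$. The only genuinely new ingredient in each case is the combinatorial identity that rewrites the product of Pochhammer symbols coming from the $\Delta$-arrays, using the classical Gauss multiplication formula in the form
\[
\Bigl(\Delta(N;\lambda)\Bigr)_{m}:=\prod_{j=0}^{N-1}\Bigl(\tfrac{\lambda+j}{N}\Bigr)_{m}=\frac{(\lambda)_{Nm}}{N^{Nm}}\,,
\]
together with its companion $\bigl(\Delta(N;-n)\bigr)_{m}=(-1)^{Nm}N^{-Nm}(-n)_{Nm}$, the latter being zero unless $Nm\le n$, which is what produces the truncation that makes the outer sum effectively finite.

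For \eqref{Generating-function-2}: after inserting \eqref{eighf-2} and swapping sums, the coefficient of $z^{m}t^{n}/m!$ involves $(\lambda)_{n}\,(-n)_{Nm}$. I would use the reflection identity $(\lambda)_{n}(-n)_{Nm}=(-1)^{Nm}(\lambda)_{Nm}(\lambda+Nm)_{n-Nm}$ (valid for $n\ge Nm$) to pull out $(\lambda)_{Nm}$ and leave an inner sum $\sum_{n\ge Nm}(\lambda+Nm)_{n-Nm}t^{n}/(n-Nm)!$; shifting $n\mapsto n+Nm$ this is $t^{Nm}(1-t)^{-\lambda-Nm}$, and collecting the $z^{m}t^{Nm}(1-t)^{-Nm}$ factors, after accounting for the $N^{-Nm}$ from the multiplication formula on both sides, reassembles the right-hand side via \eqref{eighf-2}. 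Identity \eqref{Generating-function-3} is handled the same way but now with both a $\Delta(N;-n)$ array and a $\Delta(N;\lambda+n)$ array present; one combines $(\lambda)_{n}(\lambda+n)_{Nm}(-n)_{Nm}$, applies the multiplication formula \emph{twice} on the left (giving $N^{-Nm}$ and, for the $\lambda$-array, $N^{-Nm}$) and the single $2N$-fold multiplication formula on the right (giving $(2N)^{-2Nm}$); tracking the powers of $2$ and $N$ produces exactly the factor $(-4)^{N}$ inside the argument. For \eqref{Generating-function-4} the denominator array $\Delta(N;1-\lambda-n)$ contributes $\bigl((1-\lambda-n)_{Nm}\bigr)^{-1}N^{Nm}$; the key reduction is the reflection $(-n)_{Nm}(1-\lambda-n)_{Nm}^{-1}$ combined with $(\lambda)_{n}$, which telescopes so that \emph{all} the $\lambda$- and $n$-dependence in the arrays cancels against $(\lambda+Nm)_{n}$-type factors, leaving only $zt^{N}$ in the argument and the bare $_{r}\Gamma_{s}$ on the right.

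The main obstacle I anticipate is purely bookkeeping: correctly matching the powers of $N$ (and of $2$, in \eqref{Generating-function-3}) that the multiplication formula generates on the two sides, and verifying that the Pochhammer reflection identities hold in the degenerate ranges (e.g.\ when $n<Nm$, where $(-n)_{Nm}=0$ must be consistent with the vanishing of the shifted inner sum). None of this touches the extended incomplete Pochhammer symbol $[\alpha_{1};x,p]_{m}$ itself — it is an inert spectator throughout, exactly as in Theorem \ref{GF-1}, so the convergence/interchange justification is identical: the series defining $_{r}\Gamma_{s}^{p}$ converges absolutely in its domain and $|t|<1$ gives absolute convergence of the geometric-type $n$-sum, so Fubini applies. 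I would therefore present \eqref{Generating-function-2} in full detail and remark that \eqref{Generating-function-3} and \eqref{Generating-function-4} follow \emph{mutatis mutandis} upon invoking the appropriate multiplication/reflection identities for the $\Delta$-arrays.
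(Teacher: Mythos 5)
Your proposal is correct and takes essentially the same approach as the paper, which in fact omits all details for Theorem \ref{GF-2} and merely asserts that the three identities follow by the method of derivation of \eqref{Generating-function-1}; your expansion via \eqref{eighf-2}, interchange of summations, Gauss multiplication formula for the $\Delta$-arrays, and binomial resummation are exactly that method carried out, with the powers of $N$ and $2$ matching on both sides as you describe. (One cosmetic slip: the displayed identity $(\lambda)_{n}(-n)_{Nm}=(-1)^{Nm}(\lambda)_{Nm}(\lambda+Nm)_{n-Nm}$ omits a factor $n!/(n-Nm)!$ on the right, but since you immediately divide by $n!$ and write the inner sum with $(n-Nm)!$ in the denominator, the computation you actually carry out is the correct one.)
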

\begin{proof}
The generating functions \eqref{Generating-function-2} to  \eqref{Generating-function-4} can be established by following the method of derivation of the generating function \eqref{Generating-function-1}.
The details can thus be omitted.
\end{proof}
Lastly, we give a simple consequence each of Theorem \ref{GF-1} and Theorem \ref{GF-2}. For $N=1$,  \eqref{Generating-function-1} of Theorem \ref{GF-1} and  \eqref{Generating-function-4} of Theorem \ref{GF-2} give the following generating functions.
\begin{align}\label{Generating-function-5}  
&\sum\limits_{n=0}^{\infty} \frac{(\lambda )_{n}}{n!} \, _{r+1}\Gamma_{s} \left[\begin{array}{rr}
\lambda +n, (\alpha_{1},x;p),\alpha_{2},\cdots,\alpha_{r};  \\
     \beta_{1},\cdots,\beta_{s};  \\
\end{array} z \right] t^{n}\notag \\
&\hskip 30mm=  (1-t)^{-\lambda} \,  _{r+1}\Gamma_{s} \left[\begin{array}{rr}
\lambda, (\alpha_{1},x;p),\alpha_{2},\cdots,\alpha_{r};  \\
     \beta_{1},\cdots,\beta_{s};  \\
\end{array} \frac{z}{1-t} \right]
\end{align}
and
\begin{align}\label{Generating-function-7}  
&\sum\limits_{n=0}^{\infty} \frac{(\lambda )_{n}}{n!} \, _{r+2}\Gamma_{s} \left[\begin{array}{rr}
-n, \lambda + n, (\alpha_{1},x;p),\alpha_{2},\cdots,\alpha_{r};  \\
     \beta_{1},\cdots,\beta_{s};  \\
\end{array} z \right] t^{n}\notag \\
&\hskip 30mm=  (1-t)^{-\lambda} \,  _{r+2}\Gamma_{s} \left[\begin{array}{rr}
\Delta (2;\lambda), (\alpha_{1},x;p),\alpha_{2},\cdots,\alpha_{r};  \\
     \beta_{1},\cdots,\beta_{s};  \\
\end{array} -\frac{4zt}{(1-t)^{2}} \right].
\end{align}
\section{ Fractional Calculus Approach}
In this section, we deduce the formulas for the \emph{Riemann-Liouville} fractional integral $I_{a+}^{\mu}$ and the fractional derivative $D_{a+}^{\mu}$ operators for the $_{r}\Gamma_{s}^{p}(z)$ in \eqref{gips-p-2}(see, \emph{e.g.}, \cite{Sa-Ki-Ma}):
\begin{equation}\label{Riemann-Liouville-integral}  
\left(I_{a+}^{\mu} \varphi \right) (y) = \frac{1}{\Gamma (\mu)} \int_{a}^{x} \frac{\varphi (t)}{(y-t)^{1-\mu}} dt \quad \big(\mu \in \mathbb{C},\, \Re(\mu) > 0\big)
\end{equation}
and
\begin{equation}\label{Riemann-Liouville-derivative}  
\left(D_{a+}^{\mu} \varphi \right) (y) = \left(\frac{d}{dy} \right)^{n} \left(I_{a+}^{n-{\mu}} \varphi \right) (y)\quad \big(\mu \in \mathbb{C},\, \Re(\mu) > 0;\,n=[\Re(\mu)]+1\big),
\end{equation}
where $[y]$ means the greatest integer not exceeding real $y$.
\begin{theorem}
 Let $a \in \mathbb{R}_{+}=[0,\infty);\,\rho,\,\mu,\,\omega \in \mathbb{C}\,\, and \,\,\Re(\rho)>0,\,\Re(\mu)>0,\,\Re(p)>0$. Then, for $y>a$, the following relations hold true:
\begin{align}\label{Riemann-Liouville-1}  
&\left(I_{a+}^{\mu} \left\{(t-a)^{\rho-1}\,\;_{r}\Gamma _{s} \left[\begin{array}{rr}
(\alpha_{1},x;p),\alpha_{2},\cdots,\alpha_{r};          \\
\beta_{1},\cdots,\beta_{s};    \\
\end{array} \omega(t-a) \right] \right\} \right) (y)\notag \\
&\qquad\hskip 25mm =\frac{(y-a)^{\rho+\mu-1}\Gamma(\rho)}{\Gamma(\rho+\mu)}\;_{r+1}\Gamma _{s+1} \left[\begin{array}{rr}
(\alpha_{1},x;p),\alpha_{2},\cdots,\alpha_{r},\rho;          \\
\beta_{1},\cdots,\beta_{s},\rho+\mu;    \\
\end{array} \omega(y-a) \right],
\end{align}
and
\begin{align}\label{Riemann-Liouville-2}  
&\left(D_{a+}^{\mu} \left\{(t-a)^{\rho-1}\;_{r}\Gamma _{s} \left[\begin{array}{rr}
(\alpha_{1},x;p),\alpha_{2},\cdots,\alpha_{r};          \\
\beta_{1},\cdots,\beta_{s};    \\
\end{array} \omega(t-a) \right] \right\} \right) (y)\notag \\
&\qquad\hskip 25mm =\frac{(y-a)^{\rho-\mu-1}\Gamma(\rho)}{\Gamma(\rho-\mu)}\;_{r+1}\Gamma _{s+1} \left[\begin{array}{rr}
(\alpha_{1},x;p),\alpha_{2},\cdots,\alpha_{r},\rho;          \\
\beta_{1},\cdots,\beta_{s},\rho-\mu;    \\
\end{array} \omega(y-a) \right]
\end{align}
\end{theorem}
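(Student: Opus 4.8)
The plan is to expand ${}_{r}\Gamma_{s}^{p}$ in its defining power series \eqref{eighf-2}, apply the Riemann--Liouville operator to the resulting monomials termwise, and then recombine the series. First, for $y>a$, I would write
\[
(t-a)^{\rho-1}\,{}_{r}\Gamma_{s}\!\left[(\alpha_{1},x;p),\ldots,\alpha_{r};\beta_{1},\ldots,\beta_{s};\omega(t-a)\right]
=\sum_{n=0}^{\infty}\frac{[\alpha_{1};x,p]_{n}(\alpha_{2})_{n}\cdots(\alpha_{r})_{n}}{(\beta_{1})_{n}\cdots(\beta_{s})_{n}}\,\frac{\omega^{n}}{n!}\,(t-a)^{\rho+n-1}.
\]
Granting the interchange of summation and integration, I would apply $I_{a+}^{\mu}$ from \eqref{Riemann-Liouville-integral} to each power $(t-a)^{\rho+n-1}$, using the elementary Beta-integral evaluation
\[
\bigl(I_{a+}^{\mu}(t-a)^{\rho+n-1}\bigr)(y)=\frac{1}{\Gamma(\mu)}\int_{a}^{y}(t-a)^{\rho+n-1}(y-t)^{\mu-1}\,dt=\frac{\Gamma(\rho+n)}{\Gamma(\rho+\mu+n)}\,(y-a)^{\rho+\mu+n-1},
\]
valid for $\Re(\rho)>0$ and $\Re(\mu)>0$.

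Second, I would rewrite $\Gamma(\rho+n)/\Gamma(\rho+\mu+n)=\bigl(\Gamma(\rho)/\Gamma(\rho+\mu)\bigr)\,(\rho)_{n}/(\rho+\mu)_{n}$, factor out $(y-a)^{\rho+\mu-1}\Gamma(\rho)/\Gamma(\rho+\mu)$, and observe that the remaining series
\[
\sum_{n=0}^{\infty}\frac{[\alpha_{1};x,p]_{n}(\alpha_{2})_{n}\cdots(\alpha_{r})_{n}\,(\rho)_{n}}{(\beta_{1})_{n}\cdots(\beta_{s})_{n}\,(\rho+\mu)_{n}}\,\frac{\bigl(\omega(y-a)\bigr)^{n}}{n!}
\]
is precisely ${}_{r+1}\Gamma_{s+1}$ with the extra numerator parameter $\rho$ and denominator parameter $\rho+\mu$, evaluated at $\omega(y-a)$; this gives \eqref{Riemann-Liouville-1}. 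For the fractional derivative \eqref{Riemann-Liouville-2} I would invoke the definition \eqref{Riemann-Liouville-derivative}: with $k=[\Re(\mu)]+1$ (so that $\Re(k-\mu)>0$), apply the already established formula \eqref{Riemann-Liouville-1} with $\mu$ replaced by $k-\mu$ and then differentiate $k$ times in $y$. Termwise $\frac{d^{k}}{dy^{k}}(y-a)^{\rho+k-\mu+n-1}=\frac{\Gamma(\rho+k-\mu+n)}{\Gamma(\rho-\mu+n)}(y-a)^{\rho-\mu+n-1}$, so the two Gamma quotients combine and all trace of $k$ disappears, leaving $\Gamma(\rho)/\Gamma(\rho-\mu)$ times the series with $(\rho)_{n}/(\rho-\mu)_{n}$; equivalently, \eqref{Riemann-Liouville-2} is the analytic continuation of \eqref{Riemann-Liouville-1} under $\mu\mapsto-\mu$, both sides being meromorphic in $\mu$.

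The main obstacle will be the rigorous justification of the termwise operations — the interchange of the infinite sum with the integral in \eqref{Riemann-Liouville-integral}, and, in the derivative case, with the $k$-fold differentiation. For this one should establish absolute and locally uniform convergence of the series in $t$ on compact subsets of $[a,y]$ (and of its $k$-times differentiated version in a neighbourhood of $y$), which holds under the stated conditions $\Re(\rho)>0$, $\Re(\mu)>0$, $\Re(p)>0$ together with convergence of the underlying ${}_{r}\Gamma_{s}$ series on the relevant disk — automatic when $r\le s+1$, and requiring $|\omega(y-a)|<1$ when $r=s+2$. Once these analytic points are secured, the rest is the elementary Gamma-function bookkeeping indicated above, and the two assertions follow.
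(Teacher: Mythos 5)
Your proposal is correct and follows essentially the same route as the paper: termwise application of the Riemann--Liouville operator to the monomials $(t-a)^{\rho+n-1}$ via the evaluation $\bigl(I_{a+}^{\mu}(t-a)^{\rho+n-1}\bigr)(y)=\frac{\Gamma(\rho+n)}{\Gamma(\rho+\mu+n)}(y-a)^{\rho+\mu+n-1}$, recombination of the Gamma quotients into the Pochhammer ratio $(\rho)_{n}/(\rho+\mu)_{n}$, and for the derivative case the composition $D_{a+}^{\mu}=\left(\tfrac{d}{dy}\right)^{n}I_{a+}^{n-\mu}$ applied to the already established integral formula with $\mu$ replaced by $n-\mu$. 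Your additional remarks on justifying the interchange of summation with integration and differentiation go beyond what the paper records, but the core argument is identical.
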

\begin{proof} Making use of \eqref{Riemann-Liouville-integral} and \eqref{gips-p-2} and applying term-by-term fractional integration by virtue of the formula \cite{Sa-Ki-Ma}:
\begin{equation}\label{fractional-integration-formula}
\left(I_{a+} ^{\alpha} [(t-a)^{\beta-1}]\right) (y)=\frac{\Gamma (\beta)}{\Gamma (\alpha+\beta)}(y-a)^{\alpha+\beta-1}\qquad \big(\alpha,\beta \in \mathbb{C},\, \Re(\alpha) > 0,\, \Re(\beta) > 0\big)
\end{equation}
we get for $y>a$:
\begin{align}\label{Riemann-Liouville-4}  
&\left(I_{a+}^{\mu} \left\{(t-a)^{\rho-1}\;_{r}\Gamma _{s} \left[\begin{array}{rr}
(\alpha_{1},x;p),\alpha_{2},\cdots,\alpha_{r};          \\
\beta_{1},\cdots,\beta_{s};    \\
\end{array} \omega(t-a) \right] \right\}  \right) (y)\notag \\
&\qquad\hskip 25mm =\sum_{m=0}^{\infty}\frac{[\alpha_{1};x,p]_{m}(\alpha_{2})_{m}\cdots(\alpha_{r})_{m}}{(\beta_{1})_{m}
\cdots(\beta_{s})_{m}} \frac{\omega^{m}}{m!}\left(I_{a+}^{\mu} \left\{(t-a)^{\rho+ m-1}
\right\}\right)\notag \\
&\qquad\hskip 25mm  =\frac{(y-a)^{\rho+\mu-1}\Gamma(\rho)}{\Gamma(\rho+\mu)}\;_{r+1}\Gamma _{s+1} \left[\begin{array}{rr}
(\alpha_{1},x;p),\alpha_{2},\cdots,\alpha_{r},\rho;          \\
\beta_{1},\cdots,\beta_{s},\rho+\mu;    \\
\end{array} \omega(y-a) \right].
\end{align}
Next, by using \eqref{Riemann-Liouville-derivative} and \eqref{gips-p-2} and taking into account \eqref{Riemann-Liouville-1}, with $\mu$ replaced by $n-\mu$, we have
\begin{align}\label{Riemann-Liouville-5}  
&\left(D_{a+}^{\mu} \left\{(t-a)^{\rho-1} \;_{r}\Gamma _{s} \left[\begin{array}{rr}
(\alpha_{1},x;p),\alpha_{2},\cdots,\alpha_{r};          \\
\beta_{1},\cdots,\beta_{s};    \\
\end{array} \omega(t-a) \right]\right\} \right) (x))\notag \\
&\hskip 18mm  =\left(\frac{d}{dy} \right)^{n}\left(I_{a+}^{n-\mu} \left\{(t-a)^{\rho-1} \;_{r}\Gamma _{s} \left[\begin{array}{rr}
(\alpha_{1},x;p),\alpha_{2},\cdots,\alpha_{r};          \\
\beta_{1},\cdots,\beta_{s};    \\
\end{array} \omega(t-a) \right]\right\} \right) (y)\notag \\
&\hskip 20mm  =\left(\frac{d}{dy} \right)^{n}\left\{\frac{(y-a)^{\rho+n-\mu-1}\Gamma(\rho)}{\Gamma(\rho+n-\mu)} \;_{r+1}\Gamma _{s+1} \left[\begin{array}{rr}
(\alpha_{1},x;p),\alpha_{2},\cdots,\alpha_{r},\rho;          \\
\beta_{1},\cdots,\beta_{s},\rho+n-\mu;    \\
\end{array} \omega(y-a) \right]\right\}\notag \\
&\hskip 18mm  =\frac{\Gamma(\rho)}{\Gamma(\rho+n-\mu)}\sum_{m=0}^{\infty}\frac{[\alpha_{1};x,p]_{m}(\alpha_{2})_{m}\cdots(\alpha_{r})_{m}(\rho)_{m}}{(\beta_{1})_{m}
\cdots(\beta_{s})_{m}(\rho+n-\mu)_{m}} \frac{\omega^{m}}{m!}\left(\frac{d}{dy} \right)^{n}(y-a)^{\rho+n-\mu+m-1}
\end{align}
Differentiating \eqref{Riemann-Liouville-5} term-by-term and using again \eqref{gips-p-2}, we are led to the desired result \eqref{Riemann-Liouville-2}.
\end{proof}


\begin{thebibliography}{100}
\bibitem{Cent} A. \c{C}etinkaya, The incomplete second Appell hypergeometric functions, \emph{Appl. Math. Comput.} \textbf{219} (2013),  8332--8337.
\bibitem{Ch-Zu-94}  M. A. Chaudhry and S. M. Zubair, Generalized incomplete gamma functions with applications,
   J. Comput. Appl. Math. \textbf{55} (1994), 99--124.
\bibitem{Ch-Zu}
M. A. Chaudhry and S. M. Zubair, {\it On a Class of Incomplete
Gamma Functions with Applications}, Chapman and Hall
(CRC Press Company), Boca Raton, London, New York and Washington, D.C., 2001.
\bibitem{Ol-Lo-Bo}
F. W. J. Olver, D. W. Lozier, R. F. Boisvert and C. W. Clark (Editors), \emph{NIST Handbook of Mathematical Functions} [With 1 CD-ROM (Windows, Macintosh and UNIX)], US Department of Commerce, National Institute of Standards and Technology, Washington, D.C., 2010; Cambridge University Press, Cambridge, London and New York, 2010.
\bibitem{Sa-Ki-Ma} S. G. Samko, A. A. Kilbas and O. I. Marichev, \emph{Fractional Integrals and Derivatives. Theory and Applications.} Gordon and Breach, Yverdon et al. (1993).
\bibitem{Sr-Ce-Ki}
H. M. Srivastava, A. \c{C}etinkaya and \.{I}. O. K{\i}ymaz,
A certain generalized Pochhammer symbol and its applications to hypergeometric functions, {\it Appl. Math. Comput.} {\bf 226} (2014), 484--491.
\bibitem{Sr-Ch-Ag}
H. M. Srivastava, M. A. Chaudhry and R. P. Agarwal, The incomplete Pochhammer symbols and their applications to hypergeometric and related functions, \emph{Integral Transforms Spec. Funct.} \textbf{23} (2012),  659--683.
\bibitem{Sr-Ma}
H. M. Srivastava and H. L. Manocha, \emph{A Treatise on Generating Functions}, Halsted Press (Ellis Horwood Limited, Chichester), John Wiley and Sons, New York, Chichester, Brisbane and Toronto, 1984.
\bibitem{RS2013-RJMP}
R. Srivastava, Some properties of a family of incomplete hypergeometric functions, {\it Russian J. Math. Phys.} {\bf 20} (2013), 121--128.
\bibitem{SCho}
R. Srivastava and N. E. Cho, Generating functions for a certain class of incomplete hypergeometric polynomials, {\it Appl. Math. Comput.}  {\bf 219} (2012), 3219--3225.
\end{thebibliography}
\end{document}